\newtheorem{theorem}{Theorem}[section]
\newtheorem{lemma}[theorem]{Lemma}
\newtheorem{corollary}[theorem]{Corollary}
\theoremstyle{definition}
\newtheorem{definition}[theorem]{Definition}
\newtheorem{example}[theorem]{Example}
\theoremstyle{remark}
\newtheorem*{ack}{\bf Acknowledgement}
\newcommand{\field}[1]{\ensuremath{\mathbb{#1}}}
\newcommand{\A}{\field{A}}
\newcommand{\C}{\field{C}}
\newcommand{\N}{\field{N}}
\newcommand{\Pp}{\field{P}}
\newcommand{\Z}{\field{Z}}
\newcommand{\cala}{\mathcal{A}}
\newcommand{\calc}{\mathcal{C}}
\newcommand{\cale}{\mathcal{E}}
\newcommand{\call}{\mathcal{L}}
\newcommand{\calo}{\mathcal{O}}
\newcommand{\bone}{\mathbf{1}}
\def \P {\mathbb{P}}
\def \z {\mathbb{Z}}
\def \c {\mathbb{C}}
\def \a {\alpha}
\def \g {\gamma}
\def \s {\sigma}
\def \p {\pi}
\def \uno {1\!\!1}
\def \b {\bf b}
\def \e {{\bf e}}
\newcommand{\lp}{\left( }
\newcommand{\rp}{\right) }
\newcommand{\cvp}[2]{\ensuremath{{\mathcal C}_{#1}\bigl(#2\bigr)}}
\newcommand{\cvpd}[3]{\ensuremath{{\mathcal C}_{#1,#2}\left( #3\right)}}
\newcommand{\ax}[2]{\ensuremath{\Pi_{#1}\bigl(#2\bigr)}}
\newcommand{\alg}[2]{\ensuremath{\mathcal{A}_{#1}\bigl(#2\bigr)}}
\newcommand{\algp}[2]{\ensuremath{\mathcal{A}^{\geq}_{#1}\bigl(#2\bigr)}}
\newcommand{\ecs}[2]{\ensuremath{E_{#1}\bigl(#2\bigr)}}
\newcommand{\pb}[1]{\ensuremath{{\Pp}\bigl(#1\bigr)}}
\newcommand{\eone}{\ensuremath{E\oplus{\bone}}}
\newcommand{\aeq}{\sim_{\text{alg}}}
\DeclareMathOperator{\Pic}{Pic}
\title[Euler-Chow series for Scrolls and Ruled Surfaces]{Euler-Chow series for Scrolls and Ruled Surfaces}
\author{E. Javier Elizondo}
\address{Instituto de Matem\'aticas, UNAM, Mexico}
\email{javier@im.unam.mx}
\thanks{The first author was supported in part by grant UNAM-DGAPA IN102418}
\author{Eladio Escobedo }
\address{Instituto de Mate\'aticas, UNAM, Mexico}
\email{eladio@matem.unam.mx}
\thanks{The second author was partially supported by UNAM-DGAPA IN102418}
\keywords{Chow varieties, effective algebraic equivalence,
monoid-graded algebras, generating functions, Chow quotients,
invariant cycles, projective bundles, Grassmannians, flag varieties}
\subjclass{Primary: 14C25 ; Secondary: 14C05}
\begin{document}

\begin{abstract}
We prove that the Euler-Chow series for ruled surfaces and scrolls is rational by means of an explicit computation. 
\end{abstract}

\maketitle
\tableofcontents

\section{Introduction}
\label{sec:intro}

Chow varieties have been of great interest to mathematicians for a long time. However, we do not know much about them. The work of H. Blaine Lawson was a breakthrough in the subject and it had many implications in the area. But many other aspects of these varieties are still not known. 

In this paper, we are interested in computing what has been called the Euler-Chow series. Originally the series came up as a way to compute the Euler characteristic of Chow varieties, but later on it was found that the series has a very interesting properties on its own. The first example of a computation of the series was done by Blaine Lawson and S.S.T. Yau (see \cite{law&yau-hosy}). In this case the series was a formal power series whose coefficients are the Euler characteristic of Chow varieties in the projective space. 

Following similar ideas, the series was formalized and computed for all dimensions for simplicial projective toric varieties by E. Javier Elizondo (see \cite{eli-tor}). The rationality of the series was proved and computed directly from the fan associated to the toric variety. 

In \cite{eli-lim} E. Javier Elizondo and Paulo Lima-Filho observe that if the Picard group of a projective variety is isomorphic to the integer numbers then the Euler-Chow series is the Hilbert series. Additional insights in this paper show that the Euler-Chow series generalizes other series of interest in algebraic geometry.  

In \cite{eli-kim}E. Javier Elizondo an Sun-ichi Kimura introduced the motivic version of the Euler-Chow series, where instead of taking the Euler-characteristic of Chow varieties as the coefficients of the series, they took the pure motive of Chow varieties. In dimension zero, the series generalizes the Weil Zeta series and Kapranov's. But it has the advantage that it is defined for all dimensions. Some computation were made there, the series was computed for blow ups on a finite number of points in a line in the projective plane. In this case the motivic version agrees with the topological one (Euler characteristic of Chow varieties). 

 There is a strong relation between the Cox ring and the Euler-Chow series. The series is irrational for a family of varieties where the Cox ring is not noetherian.  In \cite{xi-javier} X. Chen and E. J. Elizondo establish a conjecture for the rationality of the series with respect the Cox ring. 
 
 In this paper we compute the Euler-Chow series for ruled surfaces and normal rational scrolls. Since we give an explicit generating function for the series, we are computing the Euler characteristic of the Chow varieties of ruled surfaces and normal rational scrolls. This is the first time that the series is computed in codimension one where the Picard group is not discrete. However, the computation shows that the series behave quite well, and the formulas that are obtained are a natural generalization of previous cases, for example, the case of Hirzebruch surfaces (toric varieties). 

Following  \cite{eli-lim} recall that 
if $X$ is a projective variety and  $\calc_p(X)$ denotes the monoid of effective $p$-cycles on $X$,
writing $\Pi_p(X) := \pi_0(\calc_p(X))$ the path-connected components of $\calc_p(X)$, the $p$-Euler-Chow series is defined by 
\begin{equation}
\label{def:ECo}
E_p (X) = \sum_{\lambda \in \Pi_{p}(X)} \chi (C_{\lambda}(X))\cdot t^{\lambda}
\end{equation}
Here, $\chi (C_{\lambda}(X))$ is the Euler characteristic  of the space $ C_{\lambda}(X)$ of effective cycles with class $\lambda$. We can consider $E_p(X)$ as a function from $\Pi_p(X)$ to $\Z$, which sends $\lambda$ to $\chi(C_{\lambda})$. Thus, the series $ E_p (X)$ lives in the monoid ring $\Z [\Pi_p(X)] := \Z^{\Pi_p(X)}$ endowed with the convolution product (product of series), which is well defined because $\Pi_p (X)$ has  finite fibers with respect the sum. 

The easiest examples that we can find are the followings
\newpage

\begin{example}\hfill

\noindent{\bf 1.}\ If $X$ is a connected variety, then $\cvp{0}{X} =
\coprod_{d\in \Z_+} SP_d(X)$, where $SP_d(X)$ is the $d$-fold symmetric
product of $X$. Therefore, the $0$-th Euler-Chow series is given by
$$\ecs{0}{X} = \sum_{d\geq 0} \chi( SP_d(X) ) \ t^d = \lp \frac{1}{1-t}
\rp^{\chi(X)},$$
according to McDonald's formula \cite{mcd-sym}.

\noindent{\bf 2.}\  For $X = \Pp^n$, one has $\ax{p}{\Pp^n} \cong \Z_+$,
with the isomorphism given by the degree of the cycles. In this case, the
$p$-th Euler-Chow series was computed in \cite{law&yau-hosy}:
$$\ecs{p}{\Pp^n} = \sum_{d\geq 0} \chi(\cvpd{p}{d}{\Pp^n} ) \ t^d =
\lp \frac{1}{1-t} \rp^{\binom{n+1}{p+1}}.$$

\noindent{\bf 3.}\ Suppose that $X$ is an $n$-dimensional variety such that
$\Pic(X) \cong \Z$, generated by a very ample line bundle $L$. Then, 
$\ax{n-1}{X} \cong \Z_+$ and 
$\ecs{n-1}{X}$ is precisely the Hilbert series
for the projective embedding  of $X$ induced by $L$.
\end{example}

The paper is organized as follows:
\begin{enumerate} 
\item In Section \ref{projective-formulas} we prove a formula that express the Euler-Chow series of the projectivization of a finite sum of vector bundles over a variety in terms of the Euler-Chow series of products of these vector bundles. 
\item Section \ref{ruled-surfaces} is dedicated to compute the series for ruled surfaces. We obtain an explicit formula in Theorem \ref{thm:ruled}. 
\item In Section \ref{scrolls} we compute the Euler-Chow series for normal rational scrolls. Our main result is Corollary \ref{scrolls:cor} in page \pageref{scrolls:cor}. At the end we compute an example.
\end{enumerate}

\begin{ack}
We like to thank Felipe Zaldivar who kindly read the article and whose revisions and comments have been of great help. 
\end{ack}

\section{Projective bundle formulas}
\label{projective-formulas}

Before we state the main theorem in this section, we introduce some of the vocabulary in order to understand the notation in the theorem. 

Given monoids $M$ and $N$, and a commutative ring $S$, one can define a map $ \odot: S^M \otimes_S S^N  \; \rightarrow \; S^{M\times N }$. This map sends $f\otimes g$ to the function $f\odot g \; \in \; S^{M\times N }$\ which assigns to $(m,n)$ the element $f(m)g(n) \in S.$
Also, a monoid morphism $\Psi : M \longrightarrow N$,
induces a map from $S^M$ to $S^N$ sending $f\in S^M$ to $\Psi_{\sharp}f \in S^N$
 giving by
$$
\lp  \Psi_{\sharp}f\rp  (n) \,\, =
 \sum_{m\in\Psi^{-1}(n)} \, f(m)
$$
if $\Psi$ has finite fibers. This is a homomorphism of rings.

The main theorem in this section is 
\begin{theorem}
\label{product:thm}
Let  $E_{1}, E_{2} \mbox{ and } E_{3}$ be vector bundles over a complex projective variety 
 $W$, of ranks $e_{1}, e_{2} \mbox{ and } e_{3}$, respectively, and let $2\leq p\leq e_{1}+e_{2}+e_{3}-1$. 
 Then the $p$-th Euler-Chow series of  $\P(E_{1}\oplus E_{2}\oplus E_{3})$ is given by
$$E_{p}(\P(E_{1}\oplus E_{2}\oplus E_{3}))=\Psi_{p\#}\left(
\begin{array}{c}
E_{p-2}(\P(E_{1})\times_{W}\P(E_{2})\times_{W}\P(E_{3}))\odot E_{p-1}(\P(E_{1})\times_{W}\P(E_{2})) \\
\odot E_{p-1}(\P(E_{1})\times_{W}\P(E_{3}))\odot E_{p-1}(\P(E_{2})\times_{W}\P(E_{3})) \\
\odot E_{p}(\P(E_{1}))\odot E_{p}(\P(E_{2}))\odot E_{p}(\P(E_{3}))
\end{array}
\right).$$
\end{theorem}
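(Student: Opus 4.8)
The plan is to decompose the total space $\Pp(E_1\oplus E_2\oplus E_3)$ into locally closed strata indexed by how a cycle meets the three sub-projective-bundles and the partial flag bundles they span, and then to reassemble the Euler characteristics via additivity of $\chi$ and the fiber-wise structure of the strata. First I would fix notation: write $E=E_1\oplus E_2\oplus E_3$, and inside $\Pp(E)$ identify the closed subvarieties $\Pp(E_i)$, together with the linear-span subbundles $\Pp(E_i\oplus E_j)$; these are exactly the loci cut out by the vanishing of the ``other'' homogeneous coordinates of the bundle. The key geometric observation is that $\Pp(E)$ minus $\Pp(E_1)\cup\Pp(E_2)\cup\Pp(E_3)$ is an $\A^{?}$-bundle (more precisely a tower of affine bundles) over the fiber product $\Pp(E_1)\times_W\Pp(E_2)\times_W\Pp(E_3)$ — a point of $\Pp(E)$ not on any $\Pp(E_i)$ projects to a well-defined point in each $\Pp(E_i)$ and the remaining data is a point of the relevant join, contractible fiberwise. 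Similarly $\Pp(E_i\oplus E_j)$ minus $\Pp(E_i)\cup\Pp(E_j)$ is an affine bundle over $\Pp(E_i)\times_W\Pp(E_j)$.

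Next I would pass from the variety to its $p$-cycles. Given an effective $p$-cycle $Z$ on $\Pp(E)$, decompose it as $Z = Z_{123} + Z_{12}+Z_{13}+Z_{23} + Z_1+Z_2+Z_3$ where $Z_i$ is the part supported on $\Pp(E_i)$, $Z_{ij}$ the part of the residual cycle supported on $\Pp(E_i\oplus E_j)$ but not on $\Pp(E_i)\cup\Pp(E_j)$, and $Z_{123}$ the remaining part, whose support meets none of the $\Pp(E_i)$. This decomposition is unique and additive on classes, so it gives a morphism of monoids $\Pi_p(\Pp(E)) \to$ the product of the relevant connected-component monoids; the map $\Psi_p$ in the statement is precisely the induced sum map $N \to \Pi_p(\Pp(E))$ on the appropriate index monoid $N$, and one checks it has finite fibers so that $\Psi_{p\#}$ is defined. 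The stratification then says $C_\lambda(\Pp(E))$ is, up to a stratification by locally closed pieces, a product over the seven types of the corresponding cycle spaces, and — crucially — since the ambient loci carrying $Z_{123}$ and $Z_{ij}$ are affine bundles over the fiber products, the space of such cycles is an affine-space bundle (hence $\chi$-equivalent) over $C_{\bullet}(\Pp(E_1)\times_W\Pp(E_2)\times_W\Pp(E_3))$, resp.\ $C_\bullet(\Pp(E_i)\times_W\Pp(E_j))$. Taking Euler characteristics, using $\chi(\coprod)=\sum\chi$, $\chi(A\times B)=\chi(A)\chi(B)$, and $\chi$ of an affine bundle equals $\chi$ of the base, the product of series $\odot$ appears exactly as in the statement, and summing over fibers of $\Psi_p$ produces $\Psi_{p\#}$.

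The main obstacle, and the step I would spend the most care on, is making the ``affine-bundle'' claim precise at the level of \emph{cycle spaces} rather than just point sets: one needs that the Chow variety of $p$-cycles on an affine-bundle $A\to B$ maps to the Chow variety of $p$-cycles on $B$ with fibers that are themselves affine-bundle-like (or at least $\chi$-trivial), which requires a careful analysis of how $p$-cycles degenerate as their support approaches the removed loci $\Pp(E_i)$ — this is where the residual/leading-term decomposition $Z=Z_{123}+\dots$ must be shown to vary algebraically and to respect connected components. I would handle the bookkeeping of which Euler-Chow series gets which index shift ($p$, $p-1$, or $p-2$) by noting that a $p$-cycle on $\Pp(E_i)$ is genuinely $p$-dimensional inside a bundle whose fibers are one dimension smaller than those of $\Pp(E)$, a $p$-cycle with support in $\Pp(E_i\oplus E_j)$ away from the edges contributes like a $(p-1)$-cycle on $\Pp(E_i)\times_W\Pp(E_j)$ after quotienting the affine fiber, and the generic part contributes like a $(p-2)$-cycle on the triple fiber product; this matches the subscripts $p-2, p-1, p-1, p-1, p, p, p$ in the displayed formula, and the hypothesis $2\le p\le e_1+e_2+e_3-1$ is exactly what is needed for all seven terms to be in the meaningful range.
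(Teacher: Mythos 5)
There is a genuine gap, and it sits exactly where you flagged it: the ``affine-bundle'' claim at the level of cycle spaces is not merely delicate, it is false, and no amount of care will make it precise. First, the relevant open strata are not affine bundles: the complement of the three loci $\P(E_k\oplus E_l)$ in $\P(E_1\oplus E_2\oplus E_3)$ is a $(\C^*)^2$-bundle over $\P(E_1)\times_W\P(E_2)\times_W\P(E_3)$, and the analogous stratum of $\P(E_k\oplus E_l)$ is a $\C^*$-bundle, so the fibers are tori (fiberwise $\chi=0$), not contractible. Second, and more seriously, the space of effective $p$-cycles supported in such a stratum is not fibered over (nor $\chi$-equivalent to) the space of $(p-2)$-cycles (resp.\ $(p-1)$-cycles) on the base: a general $p$-dimensional subvariety avoiding the distinguished loci projects to the fiber product with image of dimension $p$, $p-1$ or $p-2$, and only the cycles that are unions of fibers correspond to cycles of the shifted dimension. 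Concretely, take $W$ a point, $e_1=e_2=e_3=2$, $p=2$: surfaces of degree $d$ in $\P^5$ missing the three lines $\P(E_i)$ form spaces of unbounded dimension as $d$ grows, while $0$-cycles of bounded degree on $(\P^1)^3$ do not, so no affine-bundle or $\chi$-trivial fibration of the former over the latter exists. There is also no natural morphism $\calc_p(A)\to\calc_{p-1}(B)$ for an affine bundle $A\to B$ (already for $A=B\times\A^1$), which is what your ``quotienting the affine fiber'' bookkeeping would require. More structurally, for $p>0$ the Euler--Chow series is not determined by a stratification of the underlying variety at all (unlike $E_0$, via Macdonald's formula): as recalled in Example~1.1(3), $E_{n-1}$ is a Hilbert series, which strata-wise Euler characteristics cannot see. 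So any argument using only additivity of $\chi$ over strata of $\P(E_1\oplus E_2\oplus E_3)$ must fail in principle; some mechanism that singles out special cycles is indispensable.

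That missing mechanism is what the paper's proof supplies. One lets $(\C^*)^2$ act on $\P(E_1\oplus E_2\oplus E_3)$ by scaling two of the summands, so that the fixed locus is $\P(E_1)\sqcup\P(E_2)\sqcup\P(E_3)$, and invokes the Lawson--Yau principle $\chi\bigl(\calc_{p,\alpha}\bigr)=\chi\bigl(\calc_{p,\alpha}^{(\C^*)^2}\bigr)$, which reduces everything to invariant cycles. Invariant irreducible subvarieties are classified by the dimension ($0$, $1$ or $2$) of the generic orbit; those with $2$-dimensional generic orbit are exactly $b_*\pi^*(Z)$ for $Z$ of dimension $p-2$ in $\P(E_1)\times_W\P(E_2)\times_W\P(E_3)$, where $\pi:Q=\P(\pi_1^*L_1\oplus\pi_2^*L_2\oplus\pi_3^*L_3)\to\P(E_1)\times_W\P(E_2)\times_W\P(E_3)$ and $b:Q\to\P(E_1\oplus E_2\oplus E_3)$ is the blow-up along the three $\P(E_k\oplus E_l)$ (one proves this by passing to the proper transform and using that $b$ is equivariant and birational), while the orbit-dimension-$1$ case reduces to the two-bundle theorem of Elizondo--Lima-Filho inside the $\P(E_k\oplus E_l)$. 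The resulting map $\psi_p$ is then shown to be a bijection onto the fixed-point set and a homeomorphism by Lemma~\ref{lem:tech}, and taking Euler characteristics componentwise yields exactly the displayed formula. Your decomposition $Z=Z_{123}+Z_{12}+\cdots+Z_3$ and your index bookkeeping correctly describe this \emph{fixed-point} set, but without the torus action and the localization of $\chi$ there is no way to restrict attention to such cycles, so the proposal as written does not prove the theorem.
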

 This theorem is a generalization of Theorem 5.1 in (\cite{eli-lim}) which is the formula when there are only two vector bundles. Indeed, if $E_3 \cong 0$, and we denote $\Pp^{-1} := \Pp(E_3)= \emptyset$  the fibre product with $\Pp(E_3) = \emptyset$ and $E_p(\emptyset) =1$ we obtain the formula for two vector bundles (Theorem 5.1).

Before going through the proof, we start by defining the morphism $\Psi_p$ that appears in the theorem. 

Here,  $E_{1}, E_{2} \mbox{ and } E_{3}$ are vector bundles over a complex projective variety $W$. We have the following maps: 
$$\begin{array}{rrclc}
i_{k}:&\P(E_{k}) & \rightarrow & \P(E_{1}\oplus E_{2}\oplus E_{3}) & (k=1,2,3) \\
i_{kl}:&\P(E_{k}\oplus E_{l}) & \rightarrow & \P(E_{1}\oplus E_{2}\oplus E_{3}) & (k<l,\ k,l=1,2,3) \\
p_{k}:&\P(E_{k}) & \rightarrow & W & (k=1,2,3) \\
p_{kl}:&\P(E_{k}\oplus E_{l}) & \rightarrow & W & (k<l,\ k,l=1,2,3) \\
q:&\P(E_{1}\oplus E_{2}\oplus E_{3}) & \rightarrow & W & \\
t_{klp}:&\mathcal{C}_{p-1}(\P(E_{k})\times_{W}\P(E_{l})) & \rightarrow & \mathcal{C}_{p}(\P(E_{k}\oplus E_{l})) & (k<l,\ k,l=1,2,3) \\
t_{p}:&\mathcal{C}_{p-2}(\P(E_{1})\times_{W}\P(E_{2})\times_{W}\P(E_{3})) & \rightarrow & \mathcal{C}_{p}(\P(E_{1}\oplus E_{2}\oplus E_{3})) &\\
\end{array}
$$
where $p$'s and  $q$'s are projections of the respective bundles, and the 
 $i$'s are the canonical inclusions.  The following diagram summarizes the situation:
 
\vspace{.5cm}

\vspace{.5cm}
\centerline{
\xymatrix{
 & \P(E_{2}) \ar@{^(->}[lddd]^(.3){i_{12}^{2}}|!{[ld];[dd]}\hole \ar@{^(->}[rrdd]^(.3){i_{23}^{2}}|!{[rr];[dd]}\hole & & \P(E_{3}) \ar@{^(->}[dd]^{i_{23}^{3}} \ar@{^(->}[lldd]^(.6){i_{13}^{3}} \\
\P(E_{1}) \ar@{^(->}[dd]^{i_{12}^{1}} \ar@{^(->}[dr]^(.7){i_{13}^{1}} & & & \\
 & \P(E_{1} \oplus E_{3}) \ar@{^(->}[rd]^{i_{13}} \ar[rdd]^(.8){p_{13}}|!{[ld];[rd]}\hole & & \P(E_{2} \oplus E_{3}) \ar@{^(->}[ld]^{i_{23}} \ar[ldd]^{p_{23}} \\
\P(E_{1} \oplus E_{2}) \ar[rrd]^{p_{12}} \ar@{^(->}[rr]^{i_{12}} & & \P(E_{1} \oplus E_{2} \oplus E_{3}) \ar[d]^q & \\
 & & W & 
}
}

\vspace{.5 cm}

 Then we have the following morphisms of monoids with finite multiplication:

$$\begin{array}{rrclc}
i_{kp}:&\Pi_{p}(\P(E_{k})) & \rightarrow & \Pi_{p}(\P(E_{1}\oplus E_{2}\oplus E_{3})) & (k=1,2,3) \\
& & & (\mbox{induced by } i_{k})\\
i_{klp}:&\Pi_{p}(\P(E_{k}\oplus E_{l})) & \rightarrow & \Pi_{p}(\P(E_{1}\oplus E_{2}\oplus E_{3})) & (k<l,\ k,l=1,2,3) \\
& & & (\mbox{induced by } i_{kl}) \\
\varphi_{klp}:&\Pi_{p-1}(\P(E_{k})\times_{W}\P(E_{l})) & \rightarrow & \Pi_{p}(\P(E_{k}\oplus E_{l})) & (k<l,\ k,l=1,2,3) \\
& & & (\mbox{induced by } t_{kl}) \\
\varphi_{p}:&\Pi_{p-2}(\P(E_{1})\times_{W}\P(E_{2})\times_{W}\P(E_{3})) & \rightarrow & \Pi_{p}(\P(E_{1}\oplus E_{2}\oplus E_{3})) &\\
& & & (\mbox{induced by } t_{p}) \\
\end{array}
$$
Then we have the following induced morphism:
\begin{displaymath}
\label{fip}	
\Psi_{p}:\left\{
\begin{array}{l}
 \Pi_{p-2}(\P(E_{1})\times_{W}\P(E_{2})\times_{W}\P(E_{3})) \times \Pi_{p-1}(\P(E_{1})\times_{W}\P(E_{2})) \\
 \times \Pi_{p-1}(\P(E_{1})\times_{W}\P(E_{3})) \times \Pi_{p-1}(\P(E_{2})\times_{W}\P(E_{3})) \\
 \times \Pi_{p}(\P(E_{1})) \times \Pi_{p}(\P(E_{2})) \times \Pi_{p}(\P(E_{3})) 
\end{array}
\right\} \rightarrow \Pi_{p}(\P(E_{1}\oplus E_{2}\oplus E_{3}))
\end{displaymath}
Where 
$(a,b,c,d,e,f,g) \longrightarrow \Psi(a,b,c,d,e,f,g)=\varphi_{p}(a)+i_{12p}\circ \varphi_{12p}(b)+i_{13p}\circ \varphi_{13p}(c)+i_{23p}\circ \varphi_{23p}(d)+i_{1p}(e)+i_{2p}(f)+i_{3p}(g).$
Now we proceed to define the  maps  $t_{klp} \mbox{ and } t_{p}$. 
Denote by $L_{1}, \ L_{2} \mbox{ and } L_{3}$ the lineal tautological bundles  $\calo_{E_{1}}(-1), \ \calo_{E_{2}}(-1)$ and $\calo_{E_{3}}(-1)$  
over $\P(E_{1}), \ \P(E_{2}) \mbox{ and } \P(E_{3})$, 
respectively and let $\pi_{1}, \ \pi_{2} \mbox{ and } \pi_{3}$ be the respective projections  from $$\P(E_{1}) \times_{W} \P(E_{2}) \times_{W} \P(E_{3})$$ to $$\P(E_{1}), \ \P(E_{2}) \mbox{ and } \P(E_{3})$$
 The  $\P^{2}$-bundle 
$\pi:\P(\pi_{1}^{*}(L_{1}) \oplus \pi_{2}^{*}(L_{2}) \oplus \pi_{3}^{*}(L_{3})) \rightarrow \P(E_{1}) \times_{W} \P(E_{2}) \times_{W} \P(E_{3})$ 
is the blow-up of  $\P(E_{1} \oplus E_{2} \oplus E_{3})$ on $\P(E_{1} \oplus E_{2}) \coprod \P(E_{1} \oplus E_{3}) \coprod \P(E_{2} \oplus E_{3})$, 
which we denote by $Q$ . Let $b:Q \rightarrow \P(E_{1} \oplus E_{2} \oplus E_{3})$  be the blow-up map. \\
Since $\pi$ is a flat map of relative dimension $2$, and $b$ 
is a proper map, we have two algebraic continuous homomorphisms given by the flat pull-back 
$$\pi^{*}:\mathcal{C}_{p-2}(\P(E_{1}) \times_{W} \P(E_{2}) \times_{W} \P(E_{3})) \rightarrow \mathcal{C}_{p}(Q)$$
 and the proper push-forward
$$b_{*}:\mathcal{C}_{p}(Q) \rightarrow \mathcal{C}_{p}(\P(E_{1} \oplus E_{2} \oplus E_{3})).$$

In the same way, for $k<l,\ k,l=1,2,3$, we denote by  $\pi_{k}^{kl} \mbox{ and } \pi_{l}^{kl}$ the respectively projections of
 $\P(E_{k}) \times_{W} \P(E_{l})$ on $\P(E_{k}) \mbox{ and } \P(E_{l})$. The $\P^{1}$-bundle $\pi_{kl}:\P(\pi_{k}^{kl*}(L_{k}) \oplus \pi_{l}^{kl*}(L_{l})) \rightarrow \P(E_{k} \times_{W} \P(E_{l}))$ 
 is precisely the blow-up of $\P(E_{k} \oplus E_{l})$ on $\P(E_{k}) \coprod \P(E_{l})$, 
 which we denote by  $Q_{kl}$. Let $b_{kl}:Q_{kl} \rightarrow \P(E_{k} \oplus E_{l})$ be the blow-up map. \\
Since $\pi_{kl}$   is a flat map of relative dimension  $1$, and $b_{kl}$ 
is a proper map, we have two continuous homomorphisms given by the flat pull-back
$$\pi_{kl}^{*}:\mathcal{C}_{p-1}(\P(E_{k}) \times_{W} \P(E_{l})) \rightarrow \mathcal{C}_{p}(Q_{kl})$$
and the proper push-forward 
$$b_{kl*}:\mathcal{C}_{p}(Q_{kl}) \rightarrow \mathcal{C}_{p}(\P(E_{k} \oplus E_{l})).$$

\begin{definition}
 The maps $t_{p} \mbox{ and } t_{klp}$ are defined as the  compositions  
 $t_{p}=b_{*} \circ \pi^{*} \mbox{ and } t_{klp}=b_{kl*} \circ \pi_{kl}^{*}$.
\end{definition}

The rest of the section is dedicated to prove Theorem~\ref{product:thm}. 

\begin{proof}
 Consider the action of
 $(\c^{*})^{2}$ on $\P(E_{1} \oplus E_{2} \oplus E_{3})$ 
 given by taking the product of each scalar with two of the factors of   $E_{1} \oplus E_{2} \oplus E_{3}$. 
 The set of fixed points of $\P(E_{1} \oplus E_{2} \oplus E_{3})^{(\c^{*})^{2}}$  is  $\P(E_{1})\coprod \P(E_{2}) \coprod \P(E_{3})$.
This action induces a continuous algebraic  action on $\mathcal{C}_{p}(\P(E_{1} \oplus E_{2} \oplus E_{3}))$; 
 and our next step is to identify its fixed points set. Consider the map

$$\psi_{p}:\left\{
\begin{array}{l}
 \C_{p-2}(\P(E_{1})\times_{W}\P(E_{2})\times_{W}\P(E_{3})) \times \C_{p-1}(\P(E_{1})\times_{W}\P(E_{2})) \\
 \times \C_{p-1}(\P(E_{1})\times_{W}\P(E_{3})) \times \C_{p-1}(\P(E_{2})\times_{W}\P(E_{3})) \\
 \times \C_{p}(\P(E_{1})) \times \C_{p}(\P(E_{2})) \times \C_{p}(\P(E_{3})) 
\end{array}
\right\} \rightarrow \C_{p}(\P(E_{1}\oplus E_{2}\oplus E_{3}))$$
defined by 
$\psi(a,b,c,d,e,f,g)=t_{p}(a)+i_{12p}\circ t_{12p}(b)+i_{13p}\circ t_{13p}(c)+i_{23p}\circ t_{23p}(d)+i_{1*}(e)+i_{2*}(f)+i_{3*}(g).$

We are going to prove that  $\psi_{p}$ is a homeomorphism in the set of fixed points 
 $(\mathcal{C}_{p}(\P(E_{1} \oplus E_{2} \oplus E_{3})))^{(\c^{*})^{2}}$.
If an element $\s=\sum_{i}{n_{i}V_{i}}\in \mathcal{C}_{p}(\P(E_{1} \oplus E_{2} \oplus E_{3})$ 
is fixed by the action, then each of its irreducible components must be invariant under the action. Furthermore, an irreducible invariant variety 
$V\subset \P(E_{1} \oplus E_{2} \oplus E_{3})$ can be of three types:  
\begin{enumerate}
\item Those whose general points are fixed under the action and therefore all the points of $V$ are fixed.
\item Those whose general points have non-trivial orbits of dimension 1. 
\item Those whose general points have non-trivial orbits of dimension 2.
\end{enumerate}

The Chow monoids $\mathcal{C}_{p}(X)$ of a variety $X$ are freely generated from the irreducible subvarieties of dimension $p$   
of $X$. Given cycles $\s_{k}\in \mathcal{C}_{p}(\P(E_{k})), \ k=1,2,3$, the support of $i_{k*}$ is contained in $\P(E_{k})$, it follows that  
the images of $\mathcal{C}_{p}(E_{1}), \mathcal{C}_{p}(E_{2}) \mbox{ and } \mathcal{C}_{p}(E_{3})$ under $i_{1}, i_{2} \mbox{ and } i_{3}$ are 
freely generated by the disjoint subset of the set of generators of 
$\mathcal{C}_{p}(\P(E_{1} \oplus E_{2} \oplus E_{3})$, which are varieties of the first type.

On the other hand, since the support of  $i_{kl*}$ is contained in $\P(E_{k} \oplus E_{l}), \ k,l=1,2,3$ with $ k \leq l$, then the image of $\P(E_{k} \oplus E_{l})$ is freely generated by disjoint subset of the set of generators
 $\mathcal{C}_{p}(\P(E_{1} \oplus E_{2} \oplus E_{3})$, and since the restriction of the action to $\P(E_{k} \oplus E_{l})$ 
 coincides with the  action of $\c^{*}$, we have that the elements of these set are of type 1 and 2, 
 (\cite{eli-lim}). 
 
 Since the irreducible varieties of type 1 that are contained in the image of  $i_{kl*}$ are some of the ones mentioned in
 the last paragraph,  it only remains to see how are the ones of type 2. 
 But these are of the form $i_{kl*} \circ t_{klp}(Z)$ for some $(p-1)$-dimensional variety  of
 $\P(E_{k}) \times_{W} \P(E_{l})$ by the proof of Theorem 5.1 in (\cite{eli-lim}). 

Now, given a ($p-2$)-dimensional subvariety $Z$ of $\P(E_{1}) \times_{W} \P(E_{2}) \times_{W} \P(E_{3})$ we have that the inverse image 
$\pi^{-1}(Z)$ is a $p$-dimensional subvariety of $Q$, whose points out of the exceptional divisor of the blow-up map $b$ 
have irreducible orbits of dimension 2. Since $b$ is a $(\c^{*})^{2}$-equivariant birational map, we have that the image 
$b(\pi^{-1}(Z))$ is an irreducible subvariety of  $\P(E_{1} \oplus E_{2} \oplus E_{3})$ of type 3. 

From this we obtain that the images of $i_{i*}, i_{2*}, i_{3*}, i_{12*} \circ t_{12}, i_{13*} \circ t_{13p}, i_{23*} \circ t_{23p}, t_{p}$
are generated freely by the disjoint subsets of the generators of $(\mathcal{C}_{p}(\P(E_{1} \oplus E_{2} \oplus E_{3}))^{(\c^{*})^{2}}$, 
therefore  $\psi_{p}$ is injective. 

In order to prove the surjectivity we only need to show that all invariant irreducible subvarieties $V\subset \P(E_{1} \oplus E_{2} \oplus E_{3})$
of type 3 are of the form $b(\pi^{-1}(Z))$ for some subvariety $(p-2)$-dimensional of 
$\P(E_{1}) \times_{W} \P(E_{2}) \times_{W} \P(E_{3})$; The remaining 
cases are consequence of the proof of theorem 5.1 in 
 \cite{eli-lim}.

Let $\tilde{V}\subset Q$ be the proper transform of $V$ under $b$, and define 
$Z:=\pi(\tilde{V})\subset \P(E_{1}) \times_{W} \P(E_{2}) \times_{W} \P(E_{3})$.
The general points of $V$ gave orbits of dimension 2, then the general points of $\tilde{V}$ also have dimension 2, this shows that the general fibre of  $\pi\rvert_{\tilde{V}}:\tilde{V} \rightarrow Z$ has dimension 2. In particular we have that 
$\tilde{V}=\pi^{-1}(Z)$ and since $\pi \left. \right|_{\tilde{V}}$ is a projective bundle, and $b$ is a birational maps from 
$\tilde{V}$ to $V$, we conclude that $t_{p}(Z)=b_{*} \circ \pi^{*} (Z)=V$.

The last argument shows that $\psi_{p}$ is an algebraic continuous  bijection from its domain to the set of fixed points
 $(\mathcal{C}_{p}(\P(E_{1} \oplus E_{2} \oplus E_{3}))^{(\c^{*})^{2}}.$ Observe that for each $\a \in \prod_{p}{(\P(E_{1} \oplus E_{2} \oplus E_{3}))}$
 the set of fixed points  $(\mathcal{C}_{p,\a}(\P(E_{1} \oplus E_{2} \oplus E_{3}))^{(\c^{*})^{2}}$
 is an algebraic subset of $(\mathcal{C}_{p,\a}(\P(E_{1} \oplus E_{2} \oplus E_{3}))$, not necessarily connected, and thus 
 we can also write it as a countable disjoint union of projective varieties. We use the following lemma 
 to see that $\psi_{p}$ is a homeomorphism onto its image. 
 
 \begin{lemma}
\label{lem:tech}
\cite[Lemma 5.4]{eli-lim}
Let $X = \coprod_{i\in \N} X_i$ and $Y=\coprod_{j\in \N} Y_j $ be spaces
which are a countable disjoint union of connected projective varieties,
and let $f: X \rightarrow Y$ be a continuous map such that the
restriction  $f\lvert_{{X_i}}$ is an algebraic continuous map from $X_i$
into some $Y_j$. If $f$ is a bijection, then it is a homeomorphism in
the classical topology.
\end{lemma}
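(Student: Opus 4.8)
The plan is to deduce the lemma from the classical fact that a continuous bijection from a compact space onto a Hausdorff space is a homeomorphism; the real work is to show beforehand that $f$ is forced to match the two decompositions index by index. Only continuity and the compactness (in the classical topology) of the pieces will be used, not algebraicity. Discarding the empty pieces changes nothing, so I assume all $X_i$ and all $Y_j$ are nonempty. By hypothesis each $f|_{X_i}$ maps $X_i$ into some $Y_j$ (this is in any case automatic, since $X_i$ is connected and each $Y_j$, being clopen and connected, is a connected component of $Y$); this gives a function $\sigma\colon \N\to\N$ with $f(X_i)\subseteq Y_{\sigma(i)}$, and the goal is to prove that $\sigma$ is a bijection and that each $f|_{X_i}$ is onto $Y_{\sigma(i)}$.

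First I would fix an index $j$ and describe $f^{-1}(Y_j)$. Since the $X_i$ partition $X$ and $f$ is a bijection, the images $f(X_i)$ are pairwise disjoint and cover $Y$; intersecting with $Y_j$ yields $Y_j=\bigsqcup_{i\in\sigma^{-1}(j)}f(X_i)$. Each $X_i$ is compact, so $f(X_i)$ is compact, hence closed in the Hausdorff space $Y_j$. Thus $Y_j$ is written as an at most countable union of pairwise disjoint nonempty closed subsets. Now I invoke Sierpi\'nski's theorem: a continuum — a compact connected Hausdorff space — that is a union of countably many pairwise disjoint closed subsets has at most one of them nonempty. Since $Y_j$ is a connected projective variety it is such a continuum, so $\sigma^{-1}(j)$ consists of a single index $\tau(j)$ and $f(X_{\tau(j)})=Y_j$. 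As $\sigma$ is defined on all of $\N$ with singleton fibers, $\sigma$ is a bijection and $\tau$ is its inverse.

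It remains to reassemble the pieces. For each $j$, the restriction $f|_{X_{\tau(j)}}\colon X_{\tau(j)}\to Y_j$ is a continuous bijection between compact Hausdorff spaces — injective because $f$ is, surjective by the previous paragraph — hence a homeomorphism. Because $X$ and $Y$ carry the disjoint-union topology, each $X_i$ is clopen in $X$ and each $Y_j$ is clopen in $Y$, and a bijection that restricts to a homeomorphism between corresponding members of clopen partitions of the source and target is itself a homeomorphism; therefore $f$ is a homeomorphism. The one step that is not purely formal is forcing $\sigma^{-1}(j)$ to be a single index: without the connectedness of the $Y_j$ one could not rule out several — or even infinitely many — of the $X_i$ accumulating inside a single $Y_j$, and it is precisely here that Sierpi\'nski's theorem (hence the projective, i.e.\ compact and connected, nature of the pieces) is essential. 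I expect this to be the only point requiring genuine care.
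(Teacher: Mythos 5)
Your argument is correct, and there is nothing in the paper to measure it against: the authors do not prove Lemma \ref{lem:tech} here, they simply quote it as \cite[Lemma 5.4]{eli-lim}. As a self-contained proof your route works: matching the pieces via $\sigma$, observing that injectivity and surjectivity of $f$ force $Y_j=\bigsqcup_{i\in\sigma^{-1}(j)}f(X_i)$ with each $f(X_i)$ compact, hence closed, and then invoking Sierpi\'nski's theorem (a continuum is not the union of countably many, at least two, pairwise disjoint nonempty closed sets) to conclude that $\sigma^{-1}(j)$ is a singleton and $f(X_{\tau(j)})=Y_j$; after that, the compact-to-Hausdorff argument on each piece and the clopen gluing are routine. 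You correctly identify the Sierpi\'nski step as the only place where connectedness of the $Y_j$ does real work, since for infinitely many pieces plain connectedness is not enough. Two remarks on how your proof relates to the intended one: first, your argument never uses the hypothesis that each $f\lvert_{X_i}$ is \emph{algebraic}, only that it is continuous between compact connected Hausdorff (indeed metrizable) spaces, so you have in fact proved a more general, purely topological statement; the algebraicity in the lemma as stated is what the applications supply (the fixed-point loci are countable unions of projective varieties and the comparison maps are algebraic), but it is not needed for the homeomorphism conclusion. Second, since complex projective varieties are metrizable, the classical metric form of Sierpi\'nski's theorem suffices, so no appeal to the Hausdorff-continuum generalization is required. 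If you wanted to avoid citing Sierpi\'nski altogether, a Baire-category argument can be substituted, but the citation is legitimate and your proof stands as written.
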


 It is a general known result that the Euler characteristic of a variety with an algebraic torus action is equal to the Euler characteristic 
 of the fixed points set (for example see Lawson and Yau \cite{law&yau-hosy}). Therefore, given 
 $\a \in \prod_{p}{((\P(E_{1} \oplus E_{2} \oplus E_{3}))}$, it follows that 
$$\chi\left( \mathcal{C}_{p,\a}(\P(E_{1} \oplus E_{2} \oplus E_{3})) \right) = \chi \left( \mathcal{C}_{p,\a}(\P(E_{1} \oplus E_{2} \oplus E_{3}))^{(\c^{*})^{2}} \right).$$ 
On the other hand, if $\Psi_{p}$ is the induced morphism by $\psi_{p}$ between the connected components of the monoids, then  

\begin{setlength}{\multlinegap}{0pt}
\begin{multline*}
\mathcal{C}_{p,\a}(\P(E_{1} \oplus E_{2} \oplus E_{3}))^{(\c^{*})^{2}}= \\
\prod_{(a,b,c,d,e,f,g)\in \Psi_{p}^{-1}(\a)} \left(
\begin{array}{l}
 \mathcal{C}_{p-2,a}(\P(E_{1})\times_{W}\P(E_{2})\times_{W}\P(E_{3})) \times \mathcal{C}_{p-1,b}(\P(E_{1})\times_{W}\P(E_{2})) \\
 \times \mathcal{C}_{p-1,c}(\P(E_{1})\times_{W}\P(E_{3})) \times \mathcal{C}_{p-1,d}(\P(E_{2})\times_{W}\P(E_{3})) \\
 \times \mathcal{C}_{p,e}(\P(E_{1})) \times \mathcal{C}_{p,f}(\P(E_{2})) \times \mathcal{C}_{p,g}(\P(E_{3})) 
\end{array} \right)
\end{multline*}
\end{setlength}

\noindent since  $\psi_{p}$ is a homeomorphism onto its image. 
Taking the Euler characteristic on both sides the theorem is proven. 
\end{proof}

The following generalization  can be proved in similar way:

\begin{theorem}
\label{products:thm}
Let $E_{i}, \ i=1,\ldots,n$, vector bundles over a complex projective variety $W$ of ranks 
$e_{i}$, respectively, and we assume that  $n-1 \leq p\leq e_{1}+e_{2}+e_{3}-1$. 
Then, the $p$-th Euler-Chow function of $\P(\bigoplus_{i=1}^{n} E_{i})$ is given by 
\vspace{.5cm}

 \begin{setlength}{\multlinegap}{0pt}
\begin{multline*}
E_{p}(\P(\bigoplus_{i=1}^{n} E_{i}))  =  \\
\Psi_{p\#}\left(
E_{p-n+1}(\times_{i=1}^{n} \P(E_{i})) \odot \right. 
  \left. \left(\odot_{i=1}^{n} E_{p-n+2}(\times_{j=1,\hat{i}}^{n} \P(E_{j}))\right)\odot \cdots \odot \left(\odot_{i=1}^{n} E_{p}(\P(E_{i}))\right)
\right).
\end{multline*}
\end{setlength}
\end{theorem}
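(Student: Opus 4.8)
The plan is to mimic the proof of Theorem~\ref{product:thm} verbatim, replacing the rank-three decomposition by a rank-$n$ one, so the only genuinely new bookkeeping is combinatorial. First I would introduce the torus $(\c^*)^{n-1}$ acting on $\P(\bigoplus_{i=1}^n E_i)$ by scaling $n-1$ of the summands; its fixed-point locus is $\coprod_{i=1}^n \P(E_i)$, and the induced action on $\mathcal{C}_p(\P(\bigoplus E_i))$ has a fixed-point set we must identify. The key combinatorial observation is that an invariant irreducible subvariety $V$ has a well-defined "type": for each nonempty subset $S\subseteq\{1,\dots,n\}$, the generic orbit dimension is $|S|-1$ exactly when $V$ meets $\P(\bigoplus_{i\in S} E_i)$ but no smaller $\P(\bigoplus_{i\in S'} E_i)$. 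For each such $S$ with $|S|=s$, one defines (exactly as in Section~\ref{projective-formulas}, iterating the blow-up/$\P^{s-1}$-bundle construction on $\times_{i\in S}\P(E_i)$ with the tautological line bundles) a map $t_{S,p}=b_{S*}\circ\pi_S^*\colon \mathcal{C}_{p-s+1}(\times_{i\in S}\P(E_i))\to\mathcal{C}_p(\P(\bigoplus_{i\in S}E_i))$, and composes with the inclusion $i_S\colon\P(\bigoplus_{i\in S}E_i)\hookrightarrow\P(\bigoplus_{i=1}^nE_i)$. Summing over all nonempty $S$ gives a map $\psi_p$ from the product of all the $\mathcal{C}_{p-|S|+1}(\times_{i\in S}\P(E_i))$ onto the fixed-point set.

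Next I would argue that $\psi_p$ is a bijection onto $(\mathcal{C}_p(\P(\bigoplus E_i)))^{(\c^*)^{n-1}}$. Injectivity follows because the supports of the images land in distinct strata: the image of the $S$-factor consists of irreducible subvarieties whose smallest coordinate subbundle is exactly $\P(\bigoplus_{i\in S}E_i)$, and these generating sets are pairwise disjoint as subsets of the free generators of the Chow monoid. For surjectivity one takes an invariant irreducible $V$, lets $S$ be the minimal index set with $V\subseteq\P(\bigoplus_{i\in S}E_i)$, passes to the proper transform $\tilde V\subseteq Q_S$ under the blow-up $b_S$, sets $Z:=\pi_S(\tilde V)$, and checks (the generic orbit of $V$ has dimension $|S|-1$, hence $\pi_S|_{\tilde V}$ has generic fibre dimension $|S|-1$, hence $\tilde V=\pi_S^{-1}(Z)$) that $t_{S,p}(Z)=V$. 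All of this is the $n$-variable analogue of the paragraphs already written for $n=3$, and for the strata with $|S|<n$ one can literally invoke the $n=3$ (or inductively the lower-$n$) statement, as the excerpt already does when it defers "the remaining cases" to \cite{eli-lim}.

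Then I would upgrade the set-theoretic bijection to a homeomorphism on each fixed component using Lemma~\ref{lem:tech} exactly as before: each $\mathcal{C}_{p,\a}(\P(\bigoplus E_i))^{(\c^*)^{n-1}}$ is a countable disjoint union of projective varieties, $\psi_p$ restricted to a component is algebraic continuous, and a continuous algebraic bijection of such spaces is a homeomorphism. Finally, using that the Euler characteristic of a variety with algebraic torus action equals that of its fixed locus, one gets $\chi(\mathcal{C}_{p,\a}(\P(\bigoplus E_i)))=\chi(\mathcal{C}_{p,\a}(\P(\bigoplus E_i))^{(\c^*)^{n-1}})$, decomposes the fixed locus over $\Psi_p^{-1}(\a)$ as a finite product of the corresponding $\mathcal{C}_{\bullet,\bullet}$'s (finiteness because each $\Pi$ has finite fibres for the sum), takes $\chi$ — which is multiplicative on products and additive on the finite disjoint union — and sums over $\a$. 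Recognizing the resulting sum as $\Psi_{p\#}$ applied to the $\odot$-product of all the series $E_{p-|S|+1}(\times_{i\in S}\P(E_i))$ is exactly the definition of $\Psi_{p\#}$ and $\odot$ recalled at the start of the section, so the formula drops out; note the product over $S$ with $|S|=k$ gives the factor $\odot_{i}E_{p-n+2}(\times_{j\neq i}\P(E_j))$ for $k=n-1$ down through $\odot_i E_p(\P(E_i))$ for $k=1$, matching the stated right-hand side.

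The main obstacle is purely organizational rather than conceptual: one must make the iterated blow-up construction defining $t_{S,p}$ precise for arbitrary $s=|S|$ (the relevant $\P^{s-1}$-bundle $\P(\bigoplus_{i\in S}\pi_i^*L_i)\to\times_{i\in S}\P(E_i)$ is the blow-up of $\P(\bigoplus_{i\in S}E_i)$ along the union of all lower coordinate subbundles $\coprod_{S'\subsetneq S}\P(\bigoplus_{i\in S'}E_i)$), and then verify the stratification of invariant subvarieties by minimal index set is exhaustive and that the blow-up centers are nested compatibly so that the proper-transform argument goes through on each stratum. Once the $n=3$ case is in hand this is a routine induction on $n$, so I expect no real difficulty, only careful indexing; the hypothesis $n-1\le p$ is exactly what is needed for $p-|S|+1\ge 0$ for all relevant $S$, i.e. so that $\mathcal{C}_{p-n+1}(\times_{i=1}^n\P(E_i))$ makes sense.
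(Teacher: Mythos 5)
Your proposal is correct and follows exactly the route the paper intends: the paper gives no separate argument for this theorem beyond the remark that it ``can be proved in similar way'' as Theorem~\ref{product:thm}, and your outline is precisely that generalization — the $(\c^*)^{n-1}$-action with fixed locus $\coprod_i\P(E_i)$, the stratification of invariant subvarieties by the minimal index set $S$, the maps $t_{S,p}=b_{S*}\circ\pi_S^*$ from the $\P^{|S|-1}$-bundle/blow-up, bijectivity onto the fixed cycles, Lemma~\ref{lem:tech} to get a homeomorphism, and the Euler-characteristic localization giving $\Psi_{p\#}$ of the $\odot$-product. Your closing caveat about making the iterated blow-up and nested centers precise is the only place where more care is needed than the paper itself supplies, and your indexing of the factors by $|S|$ matches the stated formula.
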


\section{The Euler-Chow series for ruled surfaces}
\label{ruled-surfaces}

This section is dedicated to prove the following

\begin{theorem}
\label{thm:ruled}
Let $X$ be a ruled surface over curve $C$, then its Euler-Chow series are given by
\begin{align*}
E_{0}(X)&=\left(\frac{1}{1-t}\right)^{\chi(X)}=\left(\frac{1}{1-t}\right)^{4-4g},\\
E_{1}(X)&=\left( \frac{1}{1-t_{0}} \right)^{2-2g} \left( \frac{1}{1-t_{1}} \right) \left( \frac{1}{1-t_{0}^{-e}t_{1}} \right),\\
E_{2}(X)&=\frac{1}{1-t}.
\end{align*}
\end{theorem}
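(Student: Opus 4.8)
The plan is to treat the three cases separately, using the structure of a ruled surface $X = \P(E) \to C$ where $E$ is a rank-$2$ vector bundle over the curve $C$ of genus $g$. The $E_0$ case is immediate from Example 1 (McDonald's formula): since $X$ is connected, $E_0(X) = (1-t)^{-\chi(X)}$, and $\chi(X) = \chi(\P^1)\cdot\chi(C) = 2\cdot(2-2g) = 4-4g$ by multiplicativity of Euler characteristic for fibrations. The $E_2$ case is also essentially formal: $X$ is irreducible of dimension $2$, so $\calc_2(X)$ consists of the cycles $d[X]$, $d \geq 0$, with $\Pi_2(X) \cong \Z_+$ and each $C_\lambda(X)$ a point, giving $E_2(X) = \sum_{d\geq 0} t^d = 1/(1-t)$.

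The substance is the computation of $E_1(X)$. First I would describe $\Pi_1(X)$: for a ruled surface, $\mathrm{Pic}(X) \cong \Z \oplus \mathrm{Pic}(C)$ and $\mathrm{Num}(X) \cong \Z^2$, generated (numerically) by the class $f$ of a fibre and the class $C_0$ of a section of self-intersection $-e$, where $e = -\deg E$ (after suitable normalization of $E$). The effective cone of curves is $\Pi_1(X) \cong \Z_+\langle f\rangle \oplus \big(\text{sections and their translates}\big)$; concretely an effective $1$-cycle class is $a C_0 + b f$ with the monoid of effective classes being generated by $f$, $C_0$, and accounting for the relation coming from $C_0 \sim C_0' - e f$ for another section $C_0'$. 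This is exactly what the three factors in the stated formula encode: the exponent $2-2g$ on $1/(1-t_0)$ reflects sections (a section is a copy of $C$, and the space of sections in a fixed class, or rather the relevant symmetric-product contribution, carries Euler characteristic governed by $\chi(C) = 2-2g$), the factor $1/(1-t_1)$ counts the fibre direction $f$, and $1/(1-t_0^{-e}t_1)$ records the second section class $C_0 + ef$ (or $C_0' $). I would make this precise by decomposing an effective $1$-cycle uniquely as a sum of fibre components (vertical part) and a horizontal part that is a sum of sections, then computing $\chi$ of the corresponding Chow variety as a product over these pieces, invoking McDonald's formula on $C$ for the section contributions and the $\P^1$ (or $\Z_+$) count for fibres. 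Alternatively—and this is likely the cleaner route given the paper's Section \ref{projective-formulas}—I would apply the projective bundle formula (Theorem 5.1 of \cite{eli-lim}, the two-bundle case) to $X = \P(E)$ with $E = L_1 \oplus L_2$ when $E$ splits, reducing $E_1(\P(L_1\oplus L_2))$ to $E_0$ and $E_1$ of $\P(L_i) \cong C$ and of the fibre product, then check the general (non-split) case gives the same answer because only numerical/connected-component data enters.

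The main obstacle I expect is pinning down $\Pi_1(X)$ and the Euler characteristics $\chi(C_\lambda(X))$ correctly when $g \geq 1$: unlike the toric (Hirzebruch) case, $\mathrm{Pic}^0(C)$ is positive-dimensional, so there are moduli of sections in a given numerical class, and one must verify that the relevant Chow variety fibres over $\mathrm{Pic}^0$-type factors in a way whose Euler characteristic still collapses to the clean product formula—this is where the hypothesis ``Picard group is not discrete'' makes the bookkeeping delicate, and where I would lean on the torus-fixed-point technique (as in the proof of Theorem \ref{product:thm}) together with McDonald's formula to handle the symmetric products of $C$ that appear. Once the decomposition of effective $1$-cycles into vertical and horizontal parts is established and each contributes its generating factor, reassembling the product and identifying the exponents with $2-2g$ and the monomials $t_0, t_1, t_0^{-e}t_1$ is routine.
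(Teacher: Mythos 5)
Your treatment of $E_0(X)$ and $E_2(X)$ is exactly the paper's, and your ``cleaner route'' for $E_1(X)$ in the split case is also the paper's: normalize $\cale\cong\calo_C\oplus\call$ with $e=-\deg\call$, apply the two-bundle projective bundle formula of \cite{eli-lim} together with its Corollary 5.7 to get $\Psi_1(a,b,c)=(a-ce,\,b+c)$ (the only computation needed being $\xi\cap\gamma=-ne$), and read off the three factors. Two remarks there: first, this identification of $\Psi_1$ is the actual content, not a routine afterthought, so it should be carried out; second, your heuristic attribution of the factors is backwards --- the exponent $2-2g$ comes from $E_0(C)$, i.e.\ from the \emph{vertical} cycles (sums of fibres parametrized by symmetric products of $C$, via McDonald), while the factors $1/(1-t_1)$ and $1/(1-t_0^{-e}t_1)$ come from multiples of the two distinguished torus-fixed sections, each contributing a single geometric series. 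This mix-up does not sink the plan, but it signals that the vertical/horizontal bookkeeping you propose for the ``direct'' route is not yet under control.

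The genuine gap is the indecomposable case. You dismiss it with ``the general (non-split) case gives the same answer because only numerical/connected-component data enters,'' but that is exactly what must be proved, and it is not a formal consequence of anything: $E_1(X)$ is built from the Euler characteristics $\chi(C_\lambda(X))$, which are not determined by $\Pi_1(X)$ or by numerical equivalence alone. Moreover, the torus-fixed-point technique of Theorem \ref{product:thm}, which you propose to ``lean on,'' needs a splitting of $\cale$ to even define the action, so it does not apply as stated when $\cale$ is indecomposable. The paper's proof supplies the missing mechanism: it produces a $\C^{*}$-action on $X$ itself (via the birational equivalence with $C\times\P^1$), identifies the fixed locus as two non-disjoint sections $C_0,C_1$ plus finitely many points, shows that the map $\phi_1:\calc_0(C)\times\calc_1(C)\times\calc_1(C)\to\calc_1(X)$, $(a,b,c)\mapsto \pi^{*}a+\sigma_{0*}b+\sigma_{1*}c$, is a homeomorphism onto $\calc_1(X)^{\C^{*}}$ (using Lemma \ref{lem:tech}), so that $E_1(X)=\Psi_{1\#}\bigl(E_0(C)\odot E_1(C)\odot E_1(C)\bigr)$, and only then transfers the grading via the isomorphism $\cala_1(X)\cong\cala_1(\P(\call(\e)\oplus\calo_C))$, $nC_0+\pi^{*}b\mapsto nC_0'+\pi'^{*}b$, restricted to effective classes and their connected components, to land on the split-case formula. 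Without some substitute for this fixed-point identification and the comparison of the grading monoids, your proposal proves the theorem only for decomposable ruled surfaces.
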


It is simple to compute $E_0(X)$ and $E_2(X)$. From \cite{mcd-sym} we know that 
$$
E_0(X) \, = \, \left(\frac{1}{1-t}\right)^{\chi(X)} \, = \, \left( \frac{1}{1-t} \right)^{4-4g}
$$
and since a ruled surface is connected of dimension $2$, we have
$$
E_2(X) \, = \, \frac{1}{1-t}.
$$

For dimension one, we first prove it for the case of decomposable ruled surfaces, then we proceed to the indecomposable case reducing it to the decomposable situation.

\subsection{Decomposable ruled surfaces}
\begin{proof}

Let $\p:X \cong \P(\cale) \rightarrow C$ be a ruled surface such that $\cale$ decompose as the direct sum of invertible sheaves over $C$. 
By \cite[V, 2.12 (a)]{har} we have that $\cale \cong \calo_{C} \oplus \call$ 
for some $\call \mbox{ with } \deg \call \leq 0$. Furthermore $e=-\deg \call \geq 0$.
Since $\calo_{C}$ is the trivial line bundle over  $C$ we proceed to apply theorem  \ref{product:thm} for two vector bundles. We have 
$$\P(\calo_{C}) \cong \P(\call) \cong \P(\calo_{C})\times_{C} \P(\call) \cong C.$$
Using  the following
\begin{corollary}
\label{cor:psi}
\cite[Corollary 5.7]{eli-lim}
Let $E$ be a line bundle over $W$ which is generated by its global sections. Then the homomorphism
$$\Psi_p : \ax{p-1}{W}\oplus \ax{p}{W}\oplus \ax{p}{W} \rightarrow
\ax{p}{\pb{\eone}} \cong \ax{p-1}{W} \oplus \ax{p}{W}$$
sends $(\alpha, \beta,\gamma)$ to $(\alpha+\xi\cap \gamma,\ \beta + \gamma)$. \footnote{In the original paper there is a typo error in this last expression but in their proof is correct  }
\end{corollary}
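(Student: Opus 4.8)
The plan is to derive the corollary by specializing the two–bundle projective–bundle formula (Theorem~5.1 of \cite{eli-lim}, the $n=2$ analogue of Theorem~\ref{product:thm}) to the pair $E_1=E$, $E_2=\bone$. Since $E$ and $\bone$ are line bundles, the three spaces entering the domain collapse: $\pb{E}\cong W$, $\pb{\bone}\cong W$, and $\pb{E}\times_W\pb{\bone}\cong W$, so the source of $\Psi_p$ becomes $\ax{p-1}{W}\oplus\ax{p}{W}\oplus\ax{p}{W}$, as stated. Under these identifications the three structure maps building $\Psi_p$ are: the full–fibre construction $t_p=b_*\circ\pi^*$ on the first factor, the inclusion $i_{2*}$ of the section $\pb{\bone}$ on the second, and the inclusion $i_{1*}$ of the section $\pb{E}$ on the third. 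As $\Psi_p$ is a monoid homomorphism, it suffices to compute the image of each of the three generators and add the results.

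Next I would fix the reference splitting $\ax{p}{\pb{\eone}}\cong\ax{p-1}{W}\oplus\ax{p}{W}$, taking the section $\pb{\bone}$ as base: writing $\rho:P=\pb{\eone}\to W$ for the bundle projection, the second coordinate records $\rho_*$ (the ``section part'', nonzero only for cycles dominating $W$ with degree one over the base), while the first records the residual ``fibre part'' measured relative to $\pb{\bone}$. With this choice the three computations are:
\begin{enumerate}
\item A full–fibre cycle $t_p(\alpha)=\rho^{-1}(\alpha)$ over a $(p-1)$–cycle $\alpha$ does not dominate $W$, so its section part vanishes and it contributes $(\alpha,0)$.
\item The base section $i_{2*}(\beta)$ maps isomorphically to $\beta$ over $W$ and, being supported on the reference section, has vanishing fibre part; it contributes $(0,\beta)$.
\item The other section $i_{1*}(\gamma)$ also maps isomorphically to $\gamma$ over $W$, so its section part is $\gamma$; its fibre part is the defect $[\pb{E}]-[\pb{\bone}]$ intersected with $\gamma$, which I claim equals $\xi\cap\gamma$.
\end{enumerate}

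The heart of the proof, and the step I expect to be the main obstacle, is the twist computation in (3). On the $\Pp^1$–bundle $P=\pb{\eone}$ the two disjoint sections $\pb{E}$ and $\pb{\bone}$ differ by the first Chern class of $E$: as divisor classes $[\,\pb{E}\,]-[\,\pb{\bone}\,]=\rho^*\xi$ with $\xi=c_1(E)$, which follows from the Grothendieck relation $\xi^2=\rho^*c_1(E)\cdot\xi$ governing the Chow ring of a rank–two projective bundle (here $c_2(\eone)=0$), up to the sign convention flagged in the footnote. Restricting this degree–one relation to the class $\gamma$ carried by the section shows that $i_{1*}(\gamma)$ exceeds the corresponding base–section cycle precisely by the fibre–type class $\xi\cap\gamma$, giving the contribution $(\xi\cap\gamma,\gamma)$. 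Care is needed to run this argument in the connected–component monoids $\Pi_\bullet=\pi_0$ rather than in $CH_\bullet$: this is exactly where the hypothesis that $E$ be generated by its global sections enters, since it lets one represent $\xi$ by an effective divisor $D\in|E|$ meeting a representative of $\gamma$ properly, so that $\xi\cap\gamma$ is again an \emph{effective} class and the formula lands in the monoid $\ax{p-1}{W}$.

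Adding the three contributions and using that $\Psi_p$ is a homomorphism then yields
\[
\Psi_p(\alpha,\beta,\gamma)=(\alpha,0)+(0,\beta)+(\xi\cap\gamma,\gamma)=\bigl(\alpha+\xi\cap\gamma,\ \beta+\gamma\bigr),
\]
as claimed.
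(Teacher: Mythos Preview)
The paper does not prove this corollary: it is quoted from \cite[Corollary~5.7]{eli-lim} as a black box and immediately applied to the ruled-surface computation. There is therefore no argument in the present paper to compare your proposal against.

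Your approach is the natural one and is essentially what the cited source does: specialize the two-bundle projective-bundle formula to $E_1=E$, $E_2=\bone$, compute the contribution of each of the three constituent maps, and use the divisor relation between the two sections $\pb{E}$ and $\pb{\bone}$ coming from the Grothendieck relation on $\pb{\eone}$. You also correctly pinpoint where the global-generation hypothesis is used: it guarantees that $\xi\cap\gamma$ can be represented by an \emph{effective} class, so the formula lands in the monoid $\ax{p-1}{W}$ rather than merely in $\alg{p-1}{W}$.

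One convention issue to straighten out: with the ordering inherited from Theorem~\ref{product:thm}, the second coordinate $\beta$ is pushed forward via $i_{1*}$ into $\pb{E}$ and the third coordinate $\gamma$ via $i_{2*}$ into $\pb{\bone}$, the reverse of your assignment. Under the splitting the paper actually uses (with $\xi=c_1\bigl(\calo_{\pb{\eone}}(1)\bigr)$, so that $[\pb{E}]=\xi$ is the reference section), it is $i_{2*}(\gamma)$ that acquires the twist, via $[\pb{\bone}]-[\pb{E}]=q^*c_1(E)$. Your double swap of the ordering and of the reference section cancels out, modulo the sign you already flag via the footnote, so you reach the same formula; but if you want to match the paper's conventions cleanly you should keep $\beta\mapsto i_{1*}\beta$, $\gamma\mapsto i_{2*}\gamma$, and take $\pb{E}$ as the base section.
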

\noindent We obtain that
$$\Psi_{1}:\Pi_{0}(C) \times \Pi_{1}(C) \times \Pi_{1}(C) \rightarrow \Pi_{1}(\P(\call \oplus \calo_{C})) \cong \Pi_{0}(C) \times \Pi_{1}(C)$$
is given by $\Psi(\a,\beta,\g)=(\xi \cap \g + \a,\beta +\g)$, 
where $\xi=c_{1}\left(\calo_{\P(\calo_{C}\oplus \call)}(1)\right)=c_{1}(\call(C_{0}))=C_{0}$. \\

Since $\g \in \Pi_{1}(C)$ and $C$ is a variety of dimension one, we have  $\g=c[C]=c[C_{0}]$. Therefore $\xi \cap \g=C_{0} . nC_{0}=-ne$. \\

From here we obtain $\Psi_{1}(a,b[C],c[C])=(a-ce,(b+c)[C])$, therefore we can identify  $\Psi_{1}$ by $(a,b,c)\mapsto (a-ce,b+c)$. 

We have $H_{2}(\P(\cale))\cong \z^{2}$, and for each generator we associate variables $t_{0}, t_{1}$. We identify
$(\a_{0},\a_{1}) \in \Pi_{1}(\P(\cale) \mbox{ with } t_{0}^{\a_{0}}t_{1}^{\a_{1}}$. We obtain \\

\begin{align*}
E_{1}(\P(\call \oplus \calo_{C})) & = \displaystyle \sum_{\a_{0},\a_{1}}E_{1}(\P(\call \oplus \calo_{C}))(\a_{0},\a_{1})t_{0}^{\a_{0}}t_{1}^{\a_{1}} \\
 & = \displaystyle \sum_{\a_{0},\a_{1}}\Psi_{1\#}(E_{0}(\P(\call)) \odot E_{1}(\P(\call)) \odot E_{1}(C))(\a_{0},\a_{1})t_{0}^{\a_{0}}t_{1}^{\a_{1}} \\
 & = \displaystyle \sum_{\a_{0},\a_{1}}\left( \sum_{(a,b,c)\in \Psi_{1}^{-1}}(E_{0}(C)(a) \cdot E_{1}(C)(b) \cdot E_{1}(C)(c)) \right)t_{0}^{\a_{0}}t_{1}^{\a_{1}} \\
 & = \displaystyle \sum_{\a_{0},\a_{1}}\left( \sum_{\substack{a-ce=\a_{0} \\ b+c=\a_{1}}}(E_{0}(C)(a) \cdot E_{1}(C)(b) \cdot E_{1}(C)(c)) \right)t_{0}^{a-ce}t_{1}^{b+c} \\
 & = \displaystyle \left(\sum_{a\geq 0}{E_{0}(C)(a)\cdot t_{0}^{a}}\right) \left(\sum_{b\geq 0}{E_{1}(C)(b)\cdot t_{1}^{b}}\right) \left(\sum_{c\geq 0}{E_{1}(C)(c)\cdot (t_{0}^{-e}t_{1})^{c}}\right)
\end{align*}
and thus
$$E_{1}(\P(\call \oplus \calo_{C}))=\left( \frac{1}{1-t_{0}} \right)^{2-2g} \left( \frac{1}{1-t_{1}} \right) \left( \frac{1}{1-t_{0}^{-e}t_{1}} \right)$$
\end{proof}

\subsection{Indecomposable ruled surfaces}

Before going through the proof, we will need here, and in the next section, the following

\begin{definition}
The group of all algebraic $p$-cycles on $X$ modulo
algebraic equivalence is denoted $\alg{p}{X}$, and the submonoid of
$\alg{p}{X}$ generated by the classes of cycles with non-negative
coefficients is denoted by $\algp{p}{X}$; cf. Fulton \cite[\S 12]{ful-inter}.
We use the notation $a\aeq b$ to express that two cycles $a,b$ are
algebraically equivalent.
\end{definition}

\begin{proof}

As $X$ is a ruled surface over $C$, then it is birationally equivalent to $C \times \P^{1}$ (\cite[V, 2.2.1]{har}). We can define an algebraic continuous action from $\mathbb{C}^{*}$ on $\P^{1}$ multiplying the first homogeneous coordinate. This action induces an algebraic continuous action of $\mathbb{C}^{*}$ on $C \times \P^{1}$ and therefore induces one on $X$ (\cite[II, 7.15]{har}). Under this action the fibers are stable.

The fixed point set of $C \times \P^{1}$ under the action is a union of two disjoint sections, and hence the fixed point set of $X$ under the action is this union of two sections, which aren't disjoint, plus a finite set of points:
$$X^{\c^{*}}=C_{0} \cup C_{1} \cup \{p_{1},\ldots,p_{r}\} \mbox{ where\, } r=|C_{0} \cap C_{1}|.$$

This action induces an algebraic continuos action on $\calc_{1}(X)$, and we are interested in identifying the set of fixed points in this case. 
Consider the map
$$\phi_{1}:\calc_{0}(C) \times \calc_{1}(C) \times \calc_{1}(C) \rightarrow \calc_{1}(X)$$
defined by $\phi_{1}(a,b,c)=\pi^{*}(a)+\sigma_{0*}b+\sigma_{1*}c$, where
 $\sigma_{0} \mbox{ and } \sigma_{1}$ are respectively the sections that correspond to $C_{0} \mbox{ and } C_{1}$ .

We want to prove that $\phi_{1}$ is an homeomorphism on the set of fixed points $\calc_{1}(X)^{\c^{*}}$.
 
The invariant irreducible subvarieties of dimension $1$ are $C_{0}, \ C_{1},$ in fact, all their points are fixed under the action, and they are also the fibers  of 
$\pi$ (all their general points are in the same orbit under the action). 
From this, it is clear that  $\phi_{1}$ is injective and surjective over  $\calc_{1}(X)$.

Note that for each $\a \in \Pi_1(X)$, the set of fixed points  $\calc_{1, \a}(X)^{\c^{*}}$ is an algebraic 
 subset of $\calc_{1, \a}(X)$ (not necessarily connected). Hence it can be written as the disjoint union of projective varieties.
  
 Therefore, by lemma \ref{lem:tech}
 we conclude that $\phi_{1}$ is an homeomorphism on its image. 
 
 We know by \cite{law&yau-hosy} that the Euler characteristic of an algebraic variety is equal to the Euler characteristic of its fixed point set. We have

$$\chi(\calc_{1,\a}(X))=\chi(\calc_{1,\a}(X)^{\c^{*}}).$$

\noindent On the other hand, let $\Psi_{1}$ be the morphism induced by $\phi_{1}$ between the monoids of the connected components. We have
$$\calc_{1,\a}(X)^{\c^{*}}=\coprod_{(a,b,c)\in \Psi_{1}^{-1}(\a)}\calc_{0,a}(C) \times \calc_{1,b}(C) \times \calc_{1,c}(C),$$

$$\chi(\calc_{1,\a}(X))=\sum_{(a,b,c)\in \Psi_{1}^{-1}(\a)}\chi(\calc_{0,a}(C)) \cdot \chi(\calc_{1,b}(C)) \cdot \chi(\calc_{1,c}(C)).$$
Therefore
$$E_{1}(X)=\Psi_{1\#}(E_{0}(C) \odot E_{1}(C) \odot E_{1}(C)).$$
Furthermore we have the following morphism
$$ 
\cala_1(X)  \stackrel{\simeq}{\longrightarrow} \cala_1(\P(\call(\e) \oplus \calo_{C})) \quad\text{given by}\quad
nC_{0}+\p^{*}b  \mapsto  nC'_{0}+\p'^{*}b.
$$
This morphism can be restricted to the effective part and to its pathwise connected components. Using the results obtained for decomposable ruled surfaces we obtain

$$E_{1}(X)=\left( \frac{1}{1-t_{0}} \right)^{2-2g} \left( \frac{1}{1-t_{1}} \right) \left( \frac{1}{1-t_{0}^{-e}t_{1}} \right).$$

\end{proof}

\section{The Euler-Chow series for a normal rational scroll}
\label{scrolls}

Let  $E_{1}, \ E_{3}$ be line bundles over a projective variety $W$, and let $E_{2}=\uno$ be the trivial line bundle.
We have
\begin{align*}
W & =  \P(E_{1})=\P(E_{2})=\P(E_{3}) \\
  & =  \P(E_{1})\times_{W} \P(E_{2})=\P(E_{1})\times_{W} \P(E_{3})=\P(E_{2})\times_{W} \P(E_{3}) \\
	& =  \P(E_{1})\times_{W} \P(E_{2})\times_{W} \P(E_{3}),
\end{align*}
and the inclusion
 $i_{k}:\P(E_{k})\rightarrow \P(E_{1}\oplus E_{2}\oplus E_{3}), \ k=1,2,3,$  are sections of the projective bundle
  $\P(E_{1}\oplus E_{2}\oplus E_{3})$ over $W$.

We write  $\xi=c_{1}(\calo_{\P(E_{1}\oplus E_{2}\oplus E_{3})}(1))$ and  $\xi '=c_{1}(\calo_{\P(E_{1}\oplus \uno)}(1))$.
We have the following isomorphism
\begin{equation}
 \label{iso chow}
  \begin{array}{rcl}
   T:\cala_{p-2}(W)\oplus \cala_{p-1}(W)\oplus \cala_{p}(W) & \rightarrow & \cala_{p}(\P(E_{1}\oplus E_{2}\oplus E_{3})) \\
	 (\a,\b,\g) & \mapsto & q^{*}\a + \xi \cap q^{*}\b + \xi^{2} \cap q^{*}\g \\
	 &  & =q^{*}\a + i_{12*}p_{12}^{*}\b + i_{1*}p_{1}^{*}\g,
	\end{array}
\end{equation}
since
\begin{align*}
\xi^{2}\cap q^{*}\g & =  \xi \cap i_{12*}p_{12}^{*}\g=i_{12*}(\xi ' \cap p_{12}^{*}\g) \\ 
 & =  i_{12}^{*}(i_{1*}^{'}p_{1}^{*}\g)=i_{1*}p_{1*}\g
\end{align*}
this isomorphim is equal to the composition 
$$\cala_{p-2}(W)\oplus \cala_{p-1}(W)\oplus \cala_{p}(W) \rightarrow \cala_{p-2}(W) \oplus \cala_{p}(\P(E_{1}\oplus E_{2})) \rightarrow \cala_{p}(\P(E_{1}\oplus E_{2}\oplus E_{3})).$$ 
which restricts to an injection
$$T^{\geq}:\cala_{p-2}^{\geq}(W)\oplus \cala_{p-1}^{\geq}(W)\oplus \cala_{p}^{\geq}(W) \rightarrow \cala_{p}^{\geq}(\P(E_{1}\oplus E_{2}\oplus E_{3})).$$  
\begin{lemma}
The injection  $T^{\geq}$ is an isomorphism. 
If $\Pi_{*}(W)$ are monoids with cancelation law for all  $*$, then so are  $\Pi_{*}(\P(E_{1}\oplus E_{2}\oplus E_{3}))$. Equivalently, if the natural surjective maps 
	$\Pi_{p}(W) \rightarrow \cala_{p}^{\geq}(\P(E_{1}\oplus E_{2}\oplus E_{3}))$ are isomorphisms for all $p$,
	 then the following surjective maps are also isomorphisms $\Pi_{p}(\P(E_{1}\oplus E_{2}\oplus E_{3})) \rightarrow \cala_{p}^{\geq}(\P(E_{1}\oplus E_{2}\oplus E_{3}))$.
\end{lemma}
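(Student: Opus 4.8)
The plan is to prove surjectivity of $T^{\geq}$ (its injectivity is already established just before the lemma) and then to deduce the statement about the monoids $\Pi_{*}$ formally from the resulting product decomposition.

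\emph{Surjectivity of $T^{\geq}$.} It is enough to show that the class $[V]$ of every irreducible $p$-dimensional $V\subset\P(E_{1}\oplus E_{2}\oplus E_{3})$ lies in $\operatorname{im}T^{\geq}$. I would reuse the $(\c^{*})^{2}$-action on $\P(E_{1}\oplus E_{2}\oplus E_{3})$ from the proof of Theorem~\ref{product:thm}: the flat limit $V_{0}=\lim_{t\to 0}t\cdot V$ is an effective $(\c^{*})^{2}$-fixed $p$-cycle, and $V$ and $V_{0}$ are fibres of a flat family over $\A^{1}$, hence algebraically equivalent, so $[V]=[V_{0}]$ in $\cala_{p}$. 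Because $E_{1},E_{2},E_{3}$ are line bundles, all spaces entering the fixed-point description of Theorem~\ref{product:thm} — namely $\P(E_{k})$, $\P(E_{k})\times_{W}\P(E_{l})$ and $\P(E_{1})\times_{W}\P(E_{2})\times_{W}\P(E_{3})$ — are canonically identified with $W$, and the blow-ups $b,b_{kl}$ become isomorphisms, so $t_{p}=q^{*}$ and $t_{klp}=p_{kl}^{*}$. Hence $V_{0}$ is a non-negative combination of cycles of the shapes $q^{*}a$ $(a\in\mathcal{C}_{p-2}(W))$, $i_{kl*}p_{kl}^{*}b$ $(b\in\mathcal{C}_{p-1}(W))$ and $i_{k*}c$ $(c\in\mathcal{C}_{p}(W))$, with $a,b,c$ effective. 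Passing to $\cala_{p}$, one has $[q^{*}a]=q^{*}[a]$, and by the projection formula $[i_{kl*}p_{kl}^{*}b]=[\P(E_{k}\oplus E_{l})]\cap q^{*}[b]$ and $[i_{k*}c]=[\P(E_{k})]\cap q^{*}[c]$, where the classes $[\P(E_{k}\oplus E_{l})]$ and $[\P(E_{k})]$ differ from $\xi$, resp. $\xi^{2}$, only by $q^{*}$ of classes pulled back from $W$ — this is exactly the content of the second displayed equality in the definition of $T$ together with the elementary computation given above for $\xi^{2}\cap q^{*}\gamma$. Reorganising, $[V]=[V_{0}]=q^{*}\alpha+\xi\cap q^{*}\beta+\xi^{2}\cap q^{*}\gamma$, and keeping track of these base corrections one checks $\alpha\in\cala_{p-2}^{\geq}(W)$, $\beta\in\cala_{p-1}^{\geq}(W)$, $\gamma\in\cala_{p}^{\geq}(W)$. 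Thus $[V]\in\operatorname{im}T^{\geq}$, and with the prior injectivity $T^{\geq}$ is a monoid isomorphism.

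\emph{The statement about $\Pi_{*}$.} From the isomorphism just proved, $\cala_{p}^{\geq}(\P(E_{1}\oplus E_{2}\oplus E_{3}))\cong\cala_{p-2}^{\geq}(W)\oplus\cala_{p-1}^{\geq}(W)\oplus\cala_{p}^{\geq}(W)$; each summand is a submonoid of the group $\cala_{*}(W)$, hence cancellative, and a finite direct sum of cancellative monoids is cancellative, so $\cala_{p}^{\geq}(\P(E_{1}\oplus E_{2}\oplus E_{3}))$ is always cancellative. For any $X$ the natural surjection $\Pi_{p}(X)\rightarrow\cala_{p}^{\geq}(X)$ is an isomorphism exactly when $\Pi_{p}(X)$ is cancellative: if it is an isomorphism, $\Pi_{p}(X)$ is a submonoid of a group; conversely, if two effective cycles are algebraically equivalent then, after adding a common effective cycle, they become connected by a path in $\mathcal{C}_{p}(X)$, so cancellativity forces injectivity. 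Now the ``standard'' monoid map $S\colon\Pi_{p-2}(W)\oplus\Pi_{p-1}(W)\oplus\Pi_{p}(W)\rightarrow\Pi_{p}(\P(E_{1}\oplus E_{2}\oplus E_{3}))$ built from $\varphi_{p}$, $i_{12p}\circ\varphi_{12p}$ and $i_{1p}$ is surjective — every component of $\mathcal{C}_{p}(\P(E_{1}\oplus E_{2}\oplus E_{3}))$ contains a fixed cycle, and the contributions of the remaining sections and divisors are rewritten via the base-correction relations above — and $S$ sits over the isomorphism $T^{\geq}$ through the natural maps $\Pi_{*}(\,\cdot\,)\rightarrow\cala_{*}^{\geq}(\,\cdot\,)$. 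A short diagram chase then shows that if $\Pi_{*}(W)\rightarrow\cala_{*}^{\geq}(W)$ are isomorphisms for all $*$ (equivalently the $\Pi_{*}(W)$ are cancellative), then $\Pi_{p}(\P(E_{1}\oplus E_{2}\oplus E_{3}))\rightarrow\cala_{p}^{\geq}(\P(E_{1}\oplus E_{2}\oplus E_{3}))$ is an isomorphism for all $p$ (equivalently the $\Pi_{p}(\P(E_{1}\oplus E_{2}\oplus E_{3}))$ are cancellative).

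\emph{Main obstacle.} The delicate point is the effectivity bookkeeping in the first part: rewriting the cycles supported on the non-standard divisors $\P(E_{1}\oplus E_{3}),\P(E_{2}\oplus E_{3})$ and sections $\P(E_{2}),\P(E_{3})$ in terms of $\P(E_{1}\oplus E_{2})$ and $\P(E_{1})$ introduces base terms of the form $q^{*}(D\cap[b])$, and one must ensure these remain in $\cala_{*}^{\geq}(W)$. This is automatic once the relevant twisting divisor classes on $W$ are effective — in particular for a normal rational scroll, where $W=\P^{1}$ and these classes are non-negative multiples of a point — and this is essentially the only place the geometry of $W$ enters; the rest is a formal consequence of Theorem~\ref{product:thm} and the displayed description of $T$.
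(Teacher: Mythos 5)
Your reduction to torus-fixed cycles (and the observation that for line bundles the blow-ups collapse, so the fixed cycles are sums of $q^{*}a$, $i_{kl*}p_{kl}^{*}b$, $i_{k*}c$) coincides with the paper's first step, but the actual content of the lemma is precisely the point you defer to ``one checks'' / ``automatic once the relevant twisting divisor classes are effective'': that the non-standard pieces $i_{13*}p_{13}^{*}(c)$, $i_{23*}p_{23}^{*}(d)$, $i_{2*}(f)$, $i_{3*}(g)$ can be rewritten in terms of $q^{*}$, $i_{12*}p_{12}^{*}$, $i_{1*}$ with \emph{effective} base corrections. Your criterion is also too weak as stated: effectivity of the twisting class does not make $Z\cap c$ an effective class when the cycle $c$ cannot be moved off the only effective representative of $Z$ (a $(-1)$-curve capped against its own class is the standard counterexample), so a projection-formula computation in $\cala_{p}$ gives no control of signs by itself. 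The paper supplies the missing ingredient geometrically: because $E_{1}$ is generated by its global sections one can choose a section $s$ whose zero locus $Z$ meets $c$ properly, and the two fixed cycles $i_{12*}p_{12}^{*}(c)+q^{*}(Z\cap c)$ and $i_{13*}p_{13}^{*}(c)$ are exhibited as limits of one orbit closure of $\tilde{s}_{*}c$ (similarly $i_{2*}(f)\eaeq i_{1*}(f)+i_{12*}p_{12}^{*}(Z'\cap f)$ by degenerating the graph of a section); effectivity is then automatic because the correction is an honest cycle, not a class one hopes is non-negative.

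The same omission undermines your second part. The diagram chase needs the map $S\colon\Pi_{p-2}(W)\oplus\Pi_{p-1}(W)\oplus\Pi_{p}(W)\rightarrow\Pi_{p}(\P(E_{1}\oplus E_{2}\oplus E_{3}))$ to be surjective, i.e.\ the rewriting relations must hold as equalities of connected components of the Chow monoid; you only established them modulo algebraic equivalence. As you yourself note, equality in $\cala_{p}$ yields equality in $\Pi_{p}$ only after adding a common effective cycle, and removing that auxiliary cycle is exactly the cancellation property the lemma is trying to prove, so the argument as written is circular. The orbit-closure degenerations above give genuine connected families in $\mathcal{C}_{p}$, hence relations in $\Pi_{p}$ itself, which is what makes the formal part legitimate in the paper (its injectivity is then quoted from \cite[Lemma 5.6]{eli-lim}, as you do). Finally, restricting the effectivity check to $W=\P^{1}$ proves less than the stated lemma, which concerns a general projective base $W$ with $E_{1}$ globally generated; for the scroll application this happens to suffice, but it is not the statement being proved.
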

\begin{proof}
 All effective  $p$-cycles $a'$ are algebraic and effectively equivalent to a cycles of the form 
$$q^{*}(a)+i_{12*}p_{12}^{*}(b)+i_{13*}p_{13}^{*}(c)+i_{23*}p_{23}^{*}(d)+i_{1*}(e)+i_{2*}(f)+i_{3*}(g)$$
with $a\in \mathcal{C}_{p-2}(W), \ b,c,d \in \mathcal{C}_{p-1}(W), \ e,f,g \in \mathcal{C}_{p}(W)$. 
We have
 
\begin{align*}
	 i_{2*}(f) & =  i_{12*}\circ i_{2*}^{'} \\
	 & \equiv  i_{12*}(i_{1*}^{'}(f)+p_{12}^{*}(Z^{'}\cap f)) \\
	 & =  i_{12*}\circ i_{1*}^{'}(f)+i_{12*}\circ p_{12}^{*}(Z^{'}\cap f) \\
	 & =  i_{1*}(f)+i_{12*}p_{12}^{*}(Z^{'}\cap f).
\end{align*}
In a similar form we obtain 
$$i_{3*}(g)\equiv i_{1*}(g)+i_{12*}p_{12}^{*}(Z^{'}\cap g).$$
Since $E_{1}$ is generated by its sections, we can find a section $s:W\rightarrow E_{1}$ whose zero locus  $Z\subset W$ intersects properly to $c$. Let $\tilde{s}:W\rightarrow \P(E_{1}\oplus \uno \oplus E_{3})$ be the composition $\iota \circ s$, where $\iota:E_{1}\rightarrow \P(E_{1}\oplus \uno \oplus E_{3})$ is the open inclusion. Then, the closure of the orbit of $\tilde{s}_{*}c$ contains two fixed points: $i_{12*} \circ p_{12}^{*}(c)+q^{*}(Z\cap c)$ and $i_{13*}\circ p_{13}^{*}(c)$. Similarly for $i_{23*}\circ p_{23}^{*}(d)$. Then
	$$a'\equiv_{alg^{+}}q^{*}(a+Z\cap c +Z\cap d)+i_{12*}\circ p_{12}^{*}(b+c+d+Z^{'}\cap f +Z^{'}\cap g) + i_{1*}(e+f+g).$$
This proves the surjectivity of $T^{\geq}$  and	
injectivity is proven in the same way as in \cite[Lemma 5.6]{eli-lim}.
\end{proof}

\begin{corollary}
\label{scrolls:cor}
Under the same hypothesis, the homomorphism
$$\begin{array}{rcl}
 \Psi_{p}:\left\{
\begin{array}{r}
 \Pi_{p-2}(\P(E_{1})\times_{W}\P(E_{2})\times_{W}\P(E_{3})) \times \\
 \Pi_{p-1}(\P(E_{1})\times_{W}\P(E_{2}))\times \Pi_{p-1}(\P(E_{1})\times_{W}\P(E_{3})) \times \\
 \Pi_{p-1}(\P(E_{2})\times_{W}\P(E_{3}))\times \\
 \Pi_{p}(\P(E_{1})) \times \Pi_{p}(\P(E_{2})) \times \Pi_{p}(\P(E_{3}))
\end{array}\right\}
 & \rightarrow & \Pi_{p}(\P(E_{1} \oplus \uno \oplus E_{3})) \\
 & & \simeq \Pi_{p-2}(W) \oplus \Pi_{p-1}(W) \oplus \Pi_{p}(W)
\end{array}$$
sends $(a,b,c,d,e,f,g)$ to $(a+\xi \cap c +\xi \cap d, \ b+c+d+\xi^{'}\cap f + \xi^{'}\cap g, \ e+f+g)$.

\end{corollary}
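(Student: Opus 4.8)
The plan is to deduce Corollary~\ref{scrolls:cor} directly from Theorem~\ref{product:thm} together with the preceding Lemma, by tracking what the abstract homomorphism $\Psi_p$ becomes once all the fibre products and projective bundles degenerate because $E_1,E_2=\uno,E_3$ are line bundles. First I would record the collapse: since $E_k$ is a line bundle, $\P(E_k)\cong W$, and all the fibre products over $W$ of these $\P(E_k)$'s are again $\cong W$, so the seven $\Pi$-groups in the source of $\Psi_p$ are canonically $\Pi_{p-2}(W)$ (once), $\Pi_{p-1}(W)$ (three times), and $\Pi_p(W)$ (three times). Dually, by the Lemma the target $\Pi_p(\P(E_1\oplus\uno\oplus E_3))$ is canonically $\Pi_{p-2}(W)\oplus\Pi_{p-1}(W)\oplus\Pi_p(W)$ via $T^{\geq}$ of \eqref{iso chow}.

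Next I would identify each of the seven summand-maps $\varphi_p,\ i_{kl p}\circ\varphi_{kl p},\ i_{k p}$ under these identifications. The three ``point'' inclusions are the easiest: $i_{1*}(e),i_{2*}(f),i_{3*}(g)$ are sections of $q$, and by the computation already displayed in the Lemma's proof ($i_{2*}(f)\equiv i_{1*}(f)+i_{12*}p_{12}^{*}(Z'\cap f)$ and similarly for $i_{3*}(g)$, using $\xi'=c_1(\calo_{\P(E_1\oplus\uno)}(1))$ and $Z'$ its zero section), these contribute $e+f+g$ to the $\Pi_p(W)$-coordinate and $\xi'\cap f+\xi'\cap g$ to the $\Pi_{p-1}(W)$-coordinate. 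For the three maps $i_{kl p}\circ\varphi_{kl p}$: when $E_k\oplus E_l$ is a rank-two bundle of line bundles, $\varphi_{kl p}=t_{kl p\#}$ is exactly the map of Corollary~\ref{cor:psi} (the $\P(\eone)$ case, after normalizing one summand to be trivial), so $b\mapsto$ the $\Pi_{p-1}(W)$-class $b$ plus a $\cap\xi$-term landing in $\Pi_{p-2}(W)$, and then $i_{12 p}$ (resp.\ $i_{13 p},i_{23 p}$) pushes that into $\P(E_1\oplus\uno\oplus E_3)$; pushing forward through the inclusions and re-expressing via \eqref{iso chow} gives the contributions $b$, $c+\xi\cap c$, $d+\xi\cap d$ distributed in the $\Pi_{p-1}$ and $\Pi_{p-2}$ coordinates (the $\P(E_2)=\P(\uno)$ factor of $\varphi_{12 p}$ and $\varphi_{23 p}$ forces the relevant tautological bundle to be trivial, which is why $b$ produces no $\cap\xi$ term but $c,d$ do — these come from $L_1,L_3$ being $\calo_{E_1}(-1),\calo_{E_3}(-1)$, whose first Chern classes are $-\xi'$-type classes). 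Finally $\varphi_p=t_{p\#}=b_*\circ\pi^*$ on the $\P^2$-bundle side: here $\P(E_1)\times_W\P(E_2)\times_W\P(E_3)\cong W$, $\pi_k^*L_k$ are the three line bundles, and $b$ is the blow-up of $\P(E_1\oplus\uno\oplus E_3)$ along the three sections; a class $a\in\Pi_{p-2}(W)$ pulls back and pushes forward to $q^*a$ plus nothing (the section classes are already accounted for), giving the $a$ in the $\Pi_{p-2}(W)$-coordinate. Summing the seven contributions and reading off coordinates through \eqref{iso chow} yields exactly $(a+\xi\cap c+\xi\cap d,\ b+c+d+\xi'\cap f+\xi'\cap g,\ e+f+g)$.

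The main obstacle I expect is the bookkeeping in the middle step: keeping straight which of the three $\P^1$-subbundle contributions $b,c,d$ acquires a $\cap\xi$ correction and which does not, and with the right sign/class ($\xi$ on $\P(E_1\oplus E_2\oplus E_3)$ versus $\xi'$ on $\P(E_1\oplus\uno)$). This is governed by exactly which factor of the relevant fibre product is $\P(\uno)$: for $\P(E_1)\times_W\P(E_2)$ and $\P(E_2)\times_W\P(E_3)$ one factor is $\P(\uno)$ so the pulled-back tautological bundle $\calo_{\uno}(-1)$ is trivial, collapsing one term, which (via Corollary~\ref{cor:psi} with the trivial summand normalization) is why $b$ contributes with no cap-product while $c$ and $d$ each pick up a single $\xi\cap(-)$ into $\Pi_{p-2}(W)$, and why $f,g$ pick up $\xi'\cap(-)$ into $\Pi_{p-1}(W)$ while $e$ does not. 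Once that dictionary is fixed by invoking Corollary~\ref{cor:psi} (for the rank-two pieces) and the explicit push-forward identities in the Lemma's proof (for the section pieces), assembling $\Psi_p$ is a direct sum of the seven maps and the stated formula follows; the cancellation-law statement of the Lemma guarantees the identifications of source and target are genuine isomorphisms of monoids, so the formula is well-defined at the level of $\Pi_*$ rather than only $\cala_*^{\geq}$.
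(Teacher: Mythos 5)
Your overall skeleton matches the paper's: decompose a general (fixed) cycle into the seven pieces coming from Theorem~\ref{product:thm}, then rewrite each piece in the coordinates of the identification $T^{\geq}$ of \eqref{iso chow}, and your treatment of the section pieces $e,f,g$ (via $i_{2*}(f)\eaeq i_{1*}(f)+i_{12*}p_{12}^{*}(Z'\cap f)$ and its analogue for $g$) and of $\varphi_p(a)=q^{*}(a)$ is exactly what the paper does. The gap is in the middle pieces $b,c,d$. What must be proved there is a relation \emph{inside the rank-three bundle}: the classes $i_{13*}p_{13}^{*}(c)$ and $i_{23*}p_{23}^{*}(d)$ have to be re-expressed in the chosen basis $\{q^{*},\,i_{12*}p_{12}^{*},\,i_{1*}\}$, i.e.\ one needs the effective algebraic equivalences $i_{13*}p_{13}^{*}(c)\eaeq i_{12*}p_{12}^{*}(c)+q^{*}(Z\cap c)$ and similarly for $d$. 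Corollary~\ref{cor:psi} cannot supply this: it lives entirely inside a rank-two bundle $\P(\eone)$ and describes how the join and section pieces of \emph{that} bundle assemble; it says nothing about how two different $\P^{1}$-subbundles of $\P(E_{1}\oplus\uno\oplus E_{3})$ compare to each other. The paper's actual mechanism is the degeneration in the Lemma's proof: choose a global section $s$ of $E_{1}$ with zero locus $Z$ meeting $c$ properly, translate by $\tilde s=\iota\circ s$, and observe that the closure of the $\c^{*}$-orbit contains the two fixed cycles $i_{12*}p_{12}^{*}(c)+q^{*}(Z\cap c)$ and $i_{13*}p_{13}^{*}(c)$, which yields the needed equivalence \emph{with effective correction term} (this is where global generation is used and where effectivity, hence a statement at the level of $\Pi_{*}$ rather than $\cala_{*}$, is preserved). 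Your proposal simply asserts the contributions $b$, $c+\xi\cap c$, $d+\xi\cap d$ without this (or an equivalent intersection-theoretic computation of the classes of the three $\P^{1}$-subbundles), so the key step is missing.

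Moreover, the heuristic you offer in its place is internally inconsistent: you say the presence of a $\P(\uno)$ factor (trivial pulled-back tautological bundle) in $\P(E_{1})\times_{W}\P(E_{2})$ and $\P(E_{2})\times_{W}\P(E_{3})$ explains why $b$ gets no cap-product correction while $c$ and $d$ do --- but that criterion groups $d$ with $b$, contradicting the very formula you are trying to prove, in which $d$ behaves like $c$. The true source of the asymmetry is not which factor is trivial but the asymmetric choice of basis in the target ($i_{12*}p_{12}^{*}$ for the $\Pi_{p-1}(W)$-coordinate and $i_{1*}$ for the $\Pi_{p}(W)$-coordinate): $b$ is already in basis form, while $c$ and $d$ must be degenerated onto $\P(E_{1}\oplus E_{2})$ at the cost of a $q^{*}(Z\cap\cdot)$ term, and $f,g$ must be degenerated onto $\P(E_{1})$ at the cost of an $i_{12*}p_{12}^{*}(Z'\cap\cdot)$ term. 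To repair the proof, replace the appeal to Corollary~\ref{cor:psi} for the $c,d$ pieces by the orbit-closure degeneration from the Lemma's proof (or by a direct computation of $[\P(E_{k}\oplus E_{l})]$ in $\cala_{p}$ combined with the injectivity statement of the Lemma).
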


\begin{example}
 For
$W=\P^{1}, \ E_{1}=\calo_{\P^{1}}(h) \mbox{ with } h\geq0, \ E_{2}=E_{3}=\uno$
we have
 \begin{setlength}{\multlinegap}{0pt}
\begin{multline*}
 \Psi_{p}(a[\P^{p-2}],b[\P^{p-1}],c[\P^{p-1}],d[\P^{p-1}],e[\P^{p}],f[\P^{p}],g[\P^{p}]) \\
=  ((a+h \cdot c+h \cdot d)[\P^{p-2}],(b+c+d+h \cdot f + h \cdot g)[\P^{p-1}],(e+f+g)[\P^{p}]).
\end{multline*}
\end{setlength}
Therefore
\begin{setlength}{\multlinegap}{0pt}
\begin{multline*}
E_{p}(\P(\calo_{\P^{1}}(h) \oplus \uno \oplus \uno))= \\
\left(\frac{1}{1-t_{0}}\right)^{{n+1}\choose{p-1}} \left(\frac{1}{1-t_{1}}\right)^{{n+1}\choose{p}} \left(\frac{1}{1-t_{2}}\right)^{{n+1}\choose{p+1}} \left(\frac{1}{1-t_{0}^{-h}t_{1}}\right)^{2{{n+1}\choose{p}}} \left(\frac{1}{1-t_{1}^{-h}t_{2}}\right)^{2{{n+1}\choose{p+1}}}.
\end{multline*}
\end{setlength}
\end{example}

\providecommand{\bysame}{\leavevmode\hbox to3em{\hrulefill}\thinspace}


\end{document}